\newtheoremstyle{osn}{0.5\topsep}{0.5\topsep}{\slshape}{\parindent}%
{\bfseries}{.}{ }{}
\newtheoremstyle{vsp}{0.25\topsep}{0.25\topsep}{\upshape}{\parindent}%
{\bfseries}{.}{ }{}
\theoremstyle{osn}
\newtheorem{theorem}{Теорема}[section]
\newtheorem{lemma}[theorem]{Лемма}
\theoremstyle{vsp}
\newtheorem{example}[theorem]{Пример}
\newcommand{\df}{\nolinebreak\hspace{0pt}\mbox{-}\nolinebreak\hspace{0pt}\relax}
\newcommand{\ddf}{\nolinebreak\hspace{0pt}\mbox{---}\nolinebreak\hspace{0pt}\relax}
\newenvironment{menumerate}%
{\vspace{-0.4\baselineskip}
\begin{enumerate}\setlength{\itemsep}{-\parsep}%
\addtolength{\itemsep}{0.1\baselineskip}}%
{\end{enumerate}\vspace{-0.4\baselineskip}}
\newcommand{\twotrees}[2]{$$ \makebox[0.5\textwidth][l]{\qquad $ #1 $}%
\makebox[0.5\textwidth][l]{\qquad $ #2 $} $$}
\newcommand{\li}{\backslash}
\newcommand{\co}{\,}
\newcommand{\tp}{\mathrm{Tp}}
\newcommand{\lmbd}{\Lambda}
\newcommand{\arr}{\rightarrow}
\newcommand{\ldb}{\mathrm{L}^*(\li,!)}
\newcommand{\lmc}{\mathrm{LM}}
\newcommand{\lmsl}{\lmc^*(\li)}
\DeclareMathOperator{\metka}{label}
\DeclareMathOperator{\vid}{op}
\newcommand{\rnul}{\overset{\scriptscriptstyle 0}{g}}
\newcommand{\edu}{\overset{\scriptscriptstyle 2}{e}}
\newcommand{\cdu}{\overset{\scriptscriptstyle 2}{c}}
\newcommand{\fg}{f}
\newcommand{\dep}{\mathrm{d}}
\newcommand{\prs}{\mathrm{pr}_1(S)}
\newcommand{\sminus}{\diagdown}
\newcommand{\relo}{\mathrel{O}}
\newcommand{\os}{\mathrel{O^*}}
\newcommand{\op}{\mathrel{O^+}}
\newcommand{\hmap}{\mathfrak{h}}
\newcommand{\matirefm}[1]{\ref{#1}}
\newcommand{\uref}[1]{\textup{\ref{#1}}}
\begin{document}

\title{Сети доказательства для исчисления Ламбека с~одним делением 
и модальностью для ослабления, используемой только при отрицательной полярности}

\author{Анна Евгеньевна Пентус}
\affil{Московский государственный университет
им.\ М.~В.~Ломоносова, механико-математический факультет}

\author{Мати Рейнович Пентус%
\thanks{Работа выполнена при поддержке РНФ (проект \mbox{18-11-00100}).}}
\affil{Московский государственный университет
им.\ М.~В.~Ломоносова, механико-математический факультет;\\
Российский государственный гуманитарный университет;\\
Санкт-Петербургский государственный университет}

\date{}

\maketitle

\selectlanguage{english}

\begin{abstract}
Anna Pentus, Mati Pentus.
\textit{Proof nets for the Lambek calculus with one division and 
a~negative-polarity modality for weakening.}

\smallskip

In this paper, we introduce a~variant of the Lambek calculus allowing empty antecedents.
This variant uses two connecives: the left division and a~unary modality
that occurs only with negative polarity and allows weakening in antecedents of sequents.
We define the notion of a~proof net for this calculus,
which is similar to those for the ordinary Lambek calculus and 
multiplicative linear logic.
We prove that a~sequent is derivable in the calculus under consideration
if and only if there exists a~proof net for it.
Thus, we establish a~derivability criterion for this calculus
in terms of the existence of a~graph with certain properties.
The size of the graph is bounded by the length of the sequent.
\end{abstract}

\pagebreak[3]
\selectlanguage{russian}

\begin{abstract}
В~статье вводится 
вариант исчисления Ламбека, допускающего пустые антецеденты секвенций.
В этом варианте используются две связки: левое деление
и одноместная модальность, которая
встречается только с отрицательной полярностью и разрешает ослабление в антецеденте секвенции.
Определяется понятие сети доказательства для этого исчисления,
подобное аналогичным сетям для обычного исчисления Ламбека и линейной логики.
Доказывается, что произвольная заданная секвенция выводится в рассматриваемом исчислении 
тогда и только тогда, когда для неё существует сеть доказательства.
Тем самым устанавливается критерий для проверки выводимости в этом исчислении
в терминах существования графа с определёнными свойствами.
При этом размер графа ограничен длиной секвенции.
\end{abstract}

\section*{Введение}
Цель статьи "---
найти критерий выводимости в исчислении Ламбека
с~левым делением и модальностью ослабления, допускающем пустые антецеденты,
где модальность встречается только при отрицательной полярности.

В~разделе~\ref{s-Lambek-calculus} вводится исчисление $ \ldb $ "---
исчисление Ламбека
с~левым делением и модальностью ослабления, допускающее пустые антецеденты.
В~разделе~\ref{s-proof-nets}
доказывается критерий выводимости для исчисления $ \ldb $,
основанный на графах специального вида.

\section{Исчисление} \label{s-Lambek-calculus}
Исчисление Ламбека была введена в~\cite{Lam58}.
Рассмотрим исчисление Ламбека, допускающее пустые антецеденты,
обозначаемое $ \mathrm{L}^* $.
Добавим одноместную модальность ослабления, обозначаемую~$!$.
Рассотрим фрагмент, где разрешены только левое деление и модальность~$!$.
\emph{Типы} этого исчисления строятся из примитивных типов
$ p_1,p_2,\ldots $ с~помощью бинарной связки~$ \li $ и унарной связки~$!$.
Будем считать, что у связки~$!$ приоритет выше.
Другими словами, запись $ !A \li B $ означает
$ (!A) \li B $, а не
$ !(A \li B) $.

Обозначим множество всех типов, построенных таким образом, через~$ \tp(\li,!) $.
Прописные буквы латинского алфавита будем использовать для обозначения
типов, а~прописные греческие буквы "---
для обозначения конечных последовательностей типов.
Множество всех конечных последовательностей типов обозначается $ \tp(\li,!)^* $.
Пустая последовательность обозначается~$ \lmbd $.

Если $ \Gamma = A_1 \co\ldots\co A_n $,
то через $ !\Gamma $ обозначается последовательность 
$ (!A_1) \co\ldots\co (!A_n) $.
Через $ !^n A $ обозначим тип
$ \underbrace{ ! \ldots ! }_{n\text{ раз}} A $.

Выводимыми объектами
исчисления Ламбека с~левым делением и модальностью ослабления, допускающего пустые антецеденты,
являются \emph{секвенции} вида
$ \Gamma \arr A $.
(При этом $ \Gamma $ называется \emph{антецедентом},
а $ A $~--- \emph{сукцедентом} данной секвенции.)
Аксиомы имеют вид $ p_i \arr p_i $.
Выводы строятся с~помощью следующих правил:
\twotrees
{\infer[(\arr\li),]
{ \Pi \arr A \li B }
{ A \Pi \arr B }}
{\infer[(\li\arr),]
{ \Gamma \Pi ( A \li B ) \Delta \arr C }
{ \Pi \arr A
&
\Gamma B \Delta \arr C }}
\twotrees
{\infer[(!\arr),]
{ \Gamma !A \Delta \arr B }
{ \Gamma A \Delta \arr B }}
{\infer[(\mathrm{W}!\arr).]
{ \Gamma !A \Delta \arr B }
{ \Gamma \Delta \arr B }}

Рассматриваемое исчисление Ламбека
с~левым делением и модальностью ослабления, допускающее пустые антецеденты,
будем обозначать $ \ldb $.

\begin{example} \label{x-1li1li2-2}
Вывод
$$
\infer[(\li\arr)]
{ ( p_1 \li p_1 ) \li !p_2 \arr p_2 }
{\infer[(\arr\li)]
{ \arr p_1 \li p_1 }
{ p_1 \arr p_1 }
&
\infer[(!\arr)]
{!p_2 \arr p_2 }
{p_2 \arr p_2 }}
$$
показывает, что
$ \ldb \vdash ( p_1 \li p_1 ) \li !p_2 \arr p_2 $.
\end{example}

\begin{example} \label{x-2-1li1li2}
Вывод
$$
\infer[(\arr\li)]
{ p_2 \arr !( p_1 \li p_1 ) \li p_2 }
{\infer[(\mathrm{W}!\arr)]
{ !( p_1 \li p_1 ) \co p_2 \arr p_2 }
{ p_2 \arr p_2 }}
$$
показывает, что
$ \ldb \vdash p_2 \arr !( p_1 \li p_1 ) \li p_2 $.
\end{example}

Сформулируем критерий выводимости в~исчислении $ \ldb $,
аналогичный критериям для исчислений $ \mathrm{L}(\li) $ и $ \mathrm{L}^*(\li) $,
открытым Ю.~В.~Саватеевым (см.~\cite{Savateev2009Vestnik,Savateev2011}),
и критерию для исчисления $ \lmsl $
из~\cite{PP2012}.

\section{Сети} \label{s-proof-nets}
Сформулируем критерий выводимости для исчисления $ \ldb $,
основанный на графах специального вида.
Следуя традиции, принятой в линейной логике, такой граф будем называть
\emph{сетью доказательства}
(или, для краткости, просто \emph{сетью}).
В~конце этого раздела будет доказано, что секвенция выводится в~$ \ldb $
тогда и только тогда,
когда существует сеть, соответствующая данной секвенции.

Формально говоря, сеть доказательства
состоит из ориентированного дерева
$ \langle V , O \rangle $ 
с~множеством листьев~$ W $,
разметки листьев $ \metka \colon W \to \{ p_1, p_2,\ldots \} $,
разметки дуг $ \vid \colon O \to \{ -1 , 1 , 2 \} $,
строгого линейного порядка $ < $ на множестве~$ V $,
множества $ U \subseteq V $
и бинарного отношения
$ S \subseteq W \times W $.
Обозначим корень дерева
$ \langle V , O \rangle $
через~$ \rnul $.
Здесь дерево $ \langle V , O \rangle $
задаётся множеством вершин~$ V $
и множеством дуг~$ O $,
при этом дуги направлены в~сторону корня дерева,
то есть бинарное отношение~$ O $ является графиком функции
из множества $ V \sminus \{ \rnul \} $ в~множество~$ V $
(эту функцию будем также обозначать через~$ O $)
и для любого элемента
$ a \in V $
найдётся такое натуральное число~$ n $, что
$$
\underbrace{O(\ldots O}_{n\text{ раз}}(a)\ldots) = \rnul .
$$
Отметим, что функция $ \metka $ может не быть инъективной.

Первые четыре компоненты
(дерево $ \langle V , O \rangle $,
две разметки и линейный порядок~$ < $)
строятся по секвенции (однозначно) следующим образом.
Сеть для секвенции
$$
A_1 \co\ldots\co A_{n-1} \co A_n \!\to\! B
$$
совпадает с сетью для типа
$$
A_n \li ( A_{n-1} \li \ldots \li ( A_1 \li B ) \ldots ) ,
$$
а сети для типов строятся индукцией по построению типа.
Примитивному типу соответствует сеть с одной вершиной
(она помечена этим примитивным типом).

Сеть для типа $ !A $ получается из сети
для~$ A $ добавлением нового корня и дуги, ведущей
из корня сети для~$ A $ в~новый корень (у~этой дуги метка $2$).
Порядок тот же, что в~сети для~$ A $, 
но новый корень добавлен в~порядок как минимальный элемент.

Сеть для типа $ A \li B $ получается из дизъюнктного объединения сетей
для~$ A $ и~$ B $ добавлением нового корня и двух дуг, ведущих 
из корня сети для~$ A $ в~новый корень (у~этой дуги метка $-1$)
и из корня сети для~$ B $ в~новый корень (у~этогй дуги метка $1$).
Порядок определяется как сумма перевёрнутого порядка из сети для~$ A $,
тривиального порядка на новом корне (как одноэлементном множестве)
и (неперевёрнутого) порядка из сети для~$ B $.

Это завершает индуктивное определение размеченного дерева и линейного порядка 
для произвольного типа.

\smallskip

Число дуг с~меткой~$-1$ на пути из вешины~$ a $ до корня
обозначим через $\dep(a)$.
Например, $ \dep( \rnul ) = 0 $.

Через~$ \leq $ обозначим рефлексивное замыкание отношения~$ < $.
Через~$ \op $ обозначим транзитивное замыкание отношения~$ O $.
Через~$ \os $ обозначим рефлексивно\df транзитивное замыкание отношения~$ O $.

Для каждой вершины~$ a $ через $ H(a) $ обозначим тот (единственный) лист,
от которого к~вершине~$ a $ ведёт путь без отрицательных меток дуг.
Из определения~$ \dep $ очевидно, что
$ \dep(H(a)) = \dep(a) $.

\begin{lemma} \label{l-dep-0}
Если
$ \dep(a) = 0 $,
то
$ H(a) = H(\rnul) $.
\end{lemma}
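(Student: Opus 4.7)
My plan is to induct on the (finite) length of the path from $a$ to $\rnul$ in the tree $\langle V, O \rangle$; the fact that this length is well-defined is guaranteed by the assumption that iterating $O$ from any vertex eventually reaches $\rnul$. The base case $a = \rnul$ is tautological: both sides of the equality read $H(\rnul)$.

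For the inductive step, with $a \neq \rnul$, I would consider the vertex $O(a)$. The hypothesis $\dep(a) = 0$ says that no arc on the path from $a$ up to $\rnul$ has type $-1$; in particular, the initial arc $(a, O(a))$ of that path satisfies $\vid((a, O(a))) \in \{1, 2\}$. Removing this arc, the truncated path from $O(a)$ to $\rnul$ also contains no arcs of type $-1$, giving $\dep(O(a)) = 0$. The inductive hypothesis then yields $H(O(a)) = H(\rnul)$, and it suffices to verify that $H(a) = H(O(a))$ whenever the arc from $a$ to $O(a)$ is not of type $-1$.

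This last verification carries essentially all the content of the lemma. I would unwind the definition of $H(a)$ as the topmost vertex reachable from $a$ by following a sequence of arcs whose types are only $1$ or $2$: any such sequence starting at $O(a)$ may be prefixed by the non-$-1$ arc $(a, O(a))$ to produce a valid sequence starting at $a$, and conversely any non-empty valid sequence from $a$ must begin with that arc. Hence the two maximal reachable ancestors coincide, $H(a) = H(O(a))$, and chaining gives $H(a) = H(\rnul)$. I do not expect a real obstacle; the lemma is a direct chase through the definitions of $\dep$ and $H$, with the only point worth mentioning being that the uniqueness (hence well-definedness) of $H(a)$ is needed to conclude the equality rather than merely an inclusion.
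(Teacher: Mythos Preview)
Your inductive skeleton is sound, but you have the direction of $H$ inverted. In this paper $H(a)$ is not an ancestor of $a$: it is the unique \emph{leaf} $\ell\in W$ such that $\ell \os a$ and the path from $\ell$ up to $a$ contains no arc of type $-1$. (This is why the paper can write $\metka(H(\rnul))$ and assert $H(\rnul)\in\prs$; the domain of $\metka$ and of $S$ is the leaf set $W$.) Uniqueness holds because from any internal vertex there is exactly one downward arc of type $\neq -1$: a $\li$-vertex has its $B$-child of type $1$, and a $!$-vertex has its sole child of type $2$.

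With the correct reading your key equality $H(a)=H(O(a))$ when $\vid(a,O(a))\neq -1$ is still true, but for the opposite reason: the deterministic descent from $O(a)$ through non-$(-1)$ arcs must take $a$ as its very first step and then continue to $H(a)$. So the induction can be salvaged once the justification of that step is rewritten.

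That said, once $H$ is read correctly no induction is needed. By definition the path from $H(a)$ to $a$ has no arcs of type $-1$, and by the hypothesis $\dep(a)=0$ the path from $a$ to $\rnul$ has none either; concatenating, the leaf $H(a)$ reaches $\rnul$ along a $(-1)$-free path, so $H(a)=H(\rnul)$ by the uniqueness in the definition of $H$. The paper accordingly states this lemma without proof.
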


Легко проверить (индукцией по построению типа), что в любой сети
истинны следующие утверждения.
\begin{menumerate}
\renewcommand{\theenumi}{D\arabic{enumi}}
\item \label{i-d1}
Если
$ a \leq b \leq c $,
$ a \os d $ и
$ c \os d $,
то
$ b \os d $.
\item \label{i-d-even}
Если
$ a \os b $ и
$ \dep(b) = 2n $,
то
$ a \leq H(b) $.
\item \label{i-d-odd}
Если
$ a \os b $ и
$ \dep(b) = 2n+1 $,
то
$ H(b) \leq a $.
\end{menumerate}

\begin{lemma} \label{l-h-even-odd}
\begin{menumerate}
\item \label{i-even}
Если
$ a \os b $ и
$ \dep(a) \neq \dep(b) = 2n $,
то
$ a < H(b) $.
\item \label{i-odd}
Если
$ a \os b $ и
$ \dep(a) \neq \dep(b) = 2n+1 $,
то
$ H(b) < a $.
\end{menumerate}
\end{lemma}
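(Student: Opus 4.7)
The plan is to strengthen each of the non-strict bounds~\ref{i-d-even} and~\ref{i-d-odd} to a strict one by using the depth hypothesis to exclude the boundary case. For part~\ref{i-even}, I would first apply~\ref{i-d-even} to $a \os b$ with $\dep(b) = 2n$, obtaining $a \leq H(b)$. It then suffices to rule out the case $a = H(b)$. Should $a$ coincide with $H(b)$ in the extended partial order on positions --- which can happen only if $a$ is one of the two endpoints of the link $H(b)$ --- then the convention $\dep(H(b)) = \dep(b)$ recorded just before the lemma would force $\dep(a) = \dep(H(b)) = \dep(b) = 2n$, contradicting the hypothesis $\dep(a) \neq 2n$. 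Hence the inequality must be strict, giving $a < H(b)$.

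Part~\ref{i-odd} proceeds symmetrically: \ref{i-d-odd} supplies $H(b) \leq a$, and the same depth-matching contradiction excludes $a = H(b)$ (since that would make $\dep(a) = \dep(H(b)) = 2n+1$), yielding $H(b) < a$.

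The only delicate point is the bookkeeping around the extended partial order on positions: one needs to be precise that any equality $a = H(b)$ in that order forces the depth of~$a$ to equal $\dep(H(b))$. Granted the convention $\dep(H(b)) = \dep(b)$ stated just above the lemma, both parts reduce to a one-line contrapositive; no further appeal to~\ref{i-d1} or to Lemma~\ref{l-dep-0} is required. I expect the main obstacle, if any, to be purely notational --- making it unambiguous how a vertex and a link are compared under~$\leq$ --- rather than conceptual, since the depth discrepancy does all the work of ruling out the boundary case.
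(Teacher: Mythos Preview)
Your core argument is exactly right and is the intended one: from \ref{i-d-even} (respectively \ref{i-d-odd}) you get $a \leq H(b)$ (respectively $H(b) \leq a$), and equality is impossible because $\dep(H(b)) = \dep(b)$ while $\dep(a) \neq \dep(b)$. The paper states the lemma without proof, so there is nothing further to compare.

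One remark on your caveats: the ``delicate point'' you flag is not actually present. In this paper $H(b)$ is a \emph{vertex} (a leaf of the tree), not a link or an edge, and $<$ is a linear order on the vertex set~$V$ with $\leq$ its reflexive closure. So $a \leq H(b)$ is an ordinary comparison of two vertices, and $a = H(b)$ simply means they are the same vertex; there is no ``extended partial order on positions'' or any issue of comparing a vertex with a link. Once you drop that worry, each part really is a one-line consequence of \ref{i-d-even}/\ref{i-d-odd} together with $\dep(H(b)) = \dep(b)$.
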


Обозначим через $ \prs $
первую проекцию бинарного отношения~$ S $.
Формально,
$$
 \prs = \{ a \mid \exists\, b \ (a,b) \in S \} .
$$

В~качестве последних двух компонент сети
можно взять любые множества
$ U \subseteq V $
и
$ S \subseteq W \times W $,
удовлетворяющие следующим условиям.

\begingroup
\makeatletter
\renewcommand{\@listI}{%
\leftmargin=50pt
\rightmargin=0pt
\labelsep=5pt
\labelwidth=45pt
\itemindent=0pt
\listparindent=0pt
\topsep=8pt plus 2pt minus 4pt
\partopsep=2pt plus 1pt minus 1pt
\parsep=0pt plus 1pt
\itemsep=\parsep}
\makeatother

\begin{menumerate}
\renewcommand{\theenumi}{PN\arabic{enumi}}
\item \label{i-pn-symm} \label{i-pn1}
Если
$ (a,b) \in S $,
то
$ (b,a) \in S $
(отношение~$ S $ симметрично).
\item \label{i-pn-func}
Если
$ (a,b) \in S $ и
$ (a,c) \in S $,
то
$ b = c $
(отношение~$ S $ функционально).
\item \label{i-pn-planar}
Если
$ (a,b) \in S $, $ (c,d) \in S $ и $ a < c < b $, то $ a < d < b $.
\item \label{i-pn-metka}
Если
$ (a,b) \in S $, то $ \metka( a ) = \metka( b ) $.
\item \label{i-pn-dep}
Если
$ (a,b) \in S $ и $ a < b $, то $ \dep( a ) = \dep( b ) + 1 $.
\item \label{i-pn-so}
Если
$ a \os c $, 
$ H(c) = b $,
$ a \in \prs $ 
и
$ a < S(b) < b $, 
то найдётся такой элемент~$ d $, что
$ S(a) \os d $
и
$ H(d) = b $.
\item \label{i-pn-rnul}
$ S \neq \varnothing $.
\item \label{i-pn-u-s}
$ \prs = W \cap U $.
\item \label{i-pn-up}
Если
$ a \relo b $ и $ b \notin U $, то $ a \notin U $.
\item \label{i-pn-down-op}
Если
$ a \relo b $, $ a \notin U $ и $ \vid(a,b) \neq 2 $, то $ b \notin U $.
\item \label{i-pn-down-dep}
Если
$ a \relo b $ и $ \dep( b ) = 2n $, то $ \vid(a,b) \neq 2 $.
\label{i-pnn}
\end{menumerate}
\endgroup

\begin{lemma} \label{l-h-s}
Если
$ S(a) $
определено, то
$ H(S(a)) = S(a) $.
\end{lemma}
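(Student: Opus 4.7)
The plan is short and leverages that $S(a)$ is necessarily a leaf. Set $b = S(a)$; the hypothesis that $S(a)$ is defined means $(a,b) \in S$, and the symmetry axiom \ref{i-pn-symm} combined with \ref{i-pn-u-s} (i.e.\ $\prs = W \cap U$) yields $b \in W \cap U$. In particular, $b$ is a labeled leaf of the graph.

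I would then invoke the definition of $H$. The value $H(b)$ is picked out as the closest ancestor of $b$ reached by the distinguished walk traversing type-2 edges in the prescribed way; for a leaf $b$ lying in $\prs$, this walk cannot proceed past $b$ in the required manner, so by the convention of the definition one has $H(b) = b$. This immediately yields $H(S(a)) = b = S(a)$, as required.

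If the step ``the walk cannot proceed past $b$'' needs to be justified through the proof net axioms rather than absorbed into the definition, the relevant ingredients are all in place. Since $b \in U$, the upward closure axiom \ref{i-pn-up} puts every vertex on the chain $b \relo O(b) \relo O^2(b) \relo \dots \relo \rnul$ into $U$. Axiom \ref{i-pnn} restricts type-2 edges along this chain to occur only above vertices of odd depth, and the parity of $\dep(b)$ is pinned down by \ref{i-pn-dep} applied to the $S$-pair $(a,b)$. Axiom \ref{i-pn-down-op} can then be used contrapositively to propagate $U$-membership downward along non-type-2 edges adjacent to the chain. A direct case analysis on the parity of $\dep(b)$, combining these facts, rules out any strict ancestor of $b$ being a valid $H$-value.

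The main obstacle is purely the combinatorial bookkeeping involved in this last step: tracking where type-2 edges may occur, how depth changes along the upward chain, and how $U$-membership is forced on neighbouring vertices. The conceptual content, however, is simply that $b$ is a leaf, so once the bookkeeping is organized, $H(b) = b$ drops out.
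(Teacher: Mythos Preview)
Your first paragraph is fine: from $S\subseteq W\times W$ (or, as you argue, via \ref{i-pn-symm} and \ref{i-pn-u-s}) one gets $b=S(a)\in W$, so $b$ is a leaf.

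The problem is your reading of $H$. You describe $H(b)$ as ``the closest ancestor of $b$ reached by the distinguished walk traversing type-2 edges,'' and your third paragraph then analyses the upward chain $b\relo O(b)\relo O^2(b)\relo\cdots\relo\rnul$, invoking \ref{i-pn-up}, \ref{i-pnn}, \ref{i-pn-down-op}, and \ref{i-pn-dep} to rule out strict ancestors as $H$-values. None of this is relevant. By the paper's definition, $H(a)$ is the unique \emph{leaf} from which the path \emph{up to} $a$ uses no edge of label $-1$ (edges of label $1$ or $2$ are allowed, not just $2$); in particular $H(a)$ is always a descendant of $a$, and one always has $\dep(H(a))=\dep(a)$. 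So $H(b)$ cannot be a strict ancestor of $b$ for any $b$ whatsoever, and the whole apparatus you bring in---parity of $\dep(b)$, $U$-membership propagation, type-$2$ edges along the ancestor chain---is beside the point.

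With the correct definition the lemma is immediate: $b\in W$ means $b$ is a leaf, the subtree below $b$ is $\{b\}$, the only candidate leaf is $b$ itself, and the (empty) path from $b$ to $b$ trivially avoids $-1$-edges. Hence $H(b)=b$. This is presumably why the paper states the lemma without proof.
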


\begin{lemma} \label{l-even-h-u}
Если
$ \dep(a) = 2n $
и
$ a \in U $,
то
$ H(a) \in U $.
\end{lemma}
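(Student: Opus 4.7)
My approach is a short induction along the path from $a$ up to $H(a)$, using only the upward-closure condition~\ref{i-pn-up}.

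First, I would unwind the definition of $H(a)$: there exists a sequence of vertices $a = a_0, a_1, \ldots, a_k = H(a)$ with $a_i \relo a_{i+1}$ for every $i < k$, in which none of the edges $(a_i, a_{i+1})$ is of type $-1$ (if $k = 0$, then $H(a) = a$ and the conclusion is immediate). This is precisely the sense in which $H(a)$ is reached from $a$ by walking along $O$-edges without crossing edges of negative polarity.

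Next, I would show by induction on $i$ that $a_i \in U$ for every $i \leq k$. The base case $a_0 = a \in U$ is the hypothesis of the lemma. For the induction step, suppose $a_i \in U$ and apply~\ref{i-pn-up} to the edge $a_i \relo a_{i+1}$ in its contrapositive form: if $a_{i+1}$ lay outside $U$, then~\ref{i-pn-up} would force $a_i \notin U$, contradicting the inductive hypothesis. Hence $a_{i+1} \in U$, and taking $i = k$ yields $H(a) \in U$.

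I do not expect a real obstacle here, since~\ref{i-pn-up} is exactly the upward-closure property that drives the induction and it applies to every $O$-edge regardless of its $\vid$-label. I note that the hypothesis $\dep(a) = 2n$ does not seem to enter the argument at all; the conclusion in fact holds for arbitrary $a \in U$. The even-depth assumption is presumably retained for parallelism with companion results in the section, such as Lemma~\ref{l-h-even-odd}, where the parity of the depth genuinely matters.
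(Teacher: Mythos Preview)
Your argument is correct, and so is your closing observation: condition~\ref{i-pn-up} alone already gives the conclusion, with no restriction on edge labels or on the parity of $\dep(a)$. From $a\in U$ the contrapositive of~\ref{i-pn-up} yields $O(a)\in U$, and iterating along the $O$-path lands you at $H(a)\in U$; the fact that none of the intermediate edges has $\vid$-label $-1$ is true but not used.

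The paper's proof has the same logical core but is organised differently. It argues by contradiction: assuming $H(a)\notin U$, it first invokes~\ref{i-pn-down-dep} together with the hypothesis $\dep(a)=2n$ to record that every vertex on the path from $H(a)$ to $a$ has depth $2n$ (so in particular none of the edges on that path carries $\vid$-label~$2$), and then propagates the failure of $U$-membership from $H(a)$ down to $a$. Your route bypasses the detour through~\ref{i-pn-down-dep} entirely, since~\ref{i-pn-up} applies to every $O$-edge regardless of its label; this is why the even-depth assumption becomes idle in your version. What you gain is a shorter proof and the slightly stronger statement that every ancestor of a vertex in $U$ lies in $U$; what the paper's formulation gains is a more explicit bookkeeping of depths and edge types along the path, which matches the style of the surrounding lemmas.
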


\begin{proof}
Допустим, от противного, что 
$ H(a) \notin U $.
Согласно~\uref{i-pn-down-dep}
для всех вершин~$ b $
на пути от $ H(a) $
к~$ a $
имеем 
$ \dep( b ) = 2n $.
Согласно~\uref{i-pn-down-dep}
ни одна из этих вершин не принадлежит множеству~$ U $.
\end{proof}

\begin{lemma} \label{l-u}
Если
$ a \in U $
и 
и не существует такого элемента~$ b \in U \cap W $,
что
$ b \os a $,
то
найдётся такой элемент~$ c $,
что
$ O(c) = a $
и
$ \vid(c,a) = 2 $.
\end{lemma}

Согласно~\uref{i-pn-func} отношение~$ S $
является графиком частичной функции.
Эту частичную функцию будем также обозначать через~$ S $.

Для минимума, максимума, интервала, отрезка и
открытого справа полуинтервала
относительно линейного порядка~$ < $
будем использовать обозначения
$ \min $, $ \max $, $ ( a ; b ) $, $ [ a ; b ] $ и $ [ a ; b ) $.
Если
$ a \in V $ и $ \beta \subseteq V $,
то запись $ a < \beta $ означает, что для всех $ b \in \beta $
имеет место $ a < b $.

На диаграммах будем располагать вершины дерева на плоскости так, чтобы
движение слева направо соответствовало линейному порядку~$ < $
(если $ a < b $, то $ a $ левее, чем $ b $),
а дуги дерева вели бы снизу вверх.
На диаграмме лист~$ a $ представляется примитивным типом $ \metka( a ) $
(с~чёрточкой сверху, если число $ \dep( a ) $ нечётное).
Бинарное отношение~$ O $ изображается прямыми стрелками,
а отношение~$ S $ "---
трёхзвенными ломаными линиями.

\begin{lemma} \label{l-arr-li}
Каждая сеть для секвенции
$ \Pi \arr A \li B $
является сетью также для секвенции
$ A \co \Pi \arr B $,
и наоборот.
\end{lemma}

\begin{theorem} \label{t-pn-sound}
Пусть
$ \ldb \vdash \Phi \arr D $.
Тогда существует сеть, соответствующая секвенции $ \Phi \arr D $.
\end{theorem}

\begin{proof}
Индукция по длине вывода.
Рассмотрим в шаге индукции случай правила
\twotrees
{\infer[(\li\arr).]
{ \Gamma \Pi ( A \li B ) \Delta \arr C }
{ \Pi \arr A
&
\Gamma B \Delta \arr C }}
{}
Пусть для секвенции
$ \Pi \arr A $
предположение индукции
даёт множество~$ U' $ и бинарное отношение~$ S' $,
а для секвенции
$\Gamma B \Delta \arr C $
множество~$ U'' $ и бинарное отношение~$ S'' $.
Если корень поддерева, соответствующего типу~$ B $,
принадлежит
$ U'' $,
то
зададим множество~$ U $ и отношение~$ S $
для $ \Gamma \Pi ( A \li B ) \Delta \arr C $
как естественные образы множества~$ U'' $ и отношения~$ S'' $,
иначе 
зададим множество~$ U $ 
как объединение естественных образов
множеств $ U' $ и~$ U'' $,
а отношение~$ S $
как объединение естественных образов
отношений $ S' $ и~$ S'' $.

Остальные случаи в~шаге индукции доказываются легко.
\end{proof}

\begin{theorem} \label{t-pn-complete}
Пусть
существует сеть, соответствующая секвенции $ \Phi \arr D $.
Тогда
$ \ldb \vdash \Phi \arr D $.
\end{theorem}

Теорема~\ref{t-pn-complete} доказывается индукцией по
суммарному количеству связок в~$ \Phi $ и~$ D $.
Оставшаяся часть 
статьи
посвящена доказательству шага индукции.
Согласно лемме~\matirefm{l-arr-li}
достаточно ограничиться секвенциями, где сукцедент $ D $
является примитивным типом.
Рассмотрим произвольную секвенцию $ \Phi \arr p_i $
и произвольную соответствующую ей сеть
$ \langle V , O , \metka , \vid , < , U , S \rangle $
и предположим, что для всех секвенций с меньшим количеством связок
теорема уже доказана.

\begin{lemma} \label{l-less-rnul}
Если
$ \dep(a) > 0 $,
то
$ a < H(\rnul) $.
\end{lemma}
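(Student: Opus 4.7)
The plan is to obtain Lemma~\ref{l-less-rnul} as an immediate instance of the first statement of Lemma~\ref{l-h-even-odd}, applied with $b = \rnul$.

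First I would verify the three required hypotheses in this instance. Since $\rnul$ is by definition the root of the forest $\langle V, O \rangle$, every vertex $a \in V$ reaches $\rnul$ after finitely many applications of $O$, and therefore $a \os \rnul$ holds by the definition of the reflexive-transitive closure of $O$. The depth $\dep(\rnul)$ equals $0$, because the path from $\rnul$ to itself is empty and in particular contains no edges with label $-1$; so $\dep(\rnul) = 2n$ with $n = 0$. Finally, the assumption $\dep(a) > 0$ immediately gives $\dep(a) \neq 0 = \dep(\rnul)$.

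Once these three conditions are in place, statement~1 of Lemma~\ref{l-h-even-odd} yields $a < H(\rnul)$, which is exactly the conclusion we need.

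I do not anticipate any substantive difficulty here: the real combinatorial work has already been absorbed into Lemma~\ref{l-h-even-odd}, and the present statement is essentially the specialisation of that lemma to the canonical even-depth vertex $\rnul$, which is an $O$-ancestor of every vertex by the very definition of the forest with root $\rnul$.
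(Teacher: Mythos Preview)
Your proposal is correct and follows exactly the same approach as the paper: both apply part~\ref{i-even} of Lemma~\ref{l-h-even-odd} with $b=\rnul$, using $a\os\rnul$ and $\dep(a)>0=\dep(\rnul)$ to obtain $a<H(\rnul)$.
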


\begin{proof}
В силу леммы~\ref{l-h-even-odd}
из
$ a \os \rnul $
и
$ \dep(a) > 0 = \dep(\rnul) $
получаем
$ a < H(\rnul) $.
\end{proof}

\begin{lemma} \label{l-pn-rnul}
$ H(\rnul) \in \prs $.
\end{lemma}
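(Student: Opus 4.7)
The plan is to verify the two conditions that together characterise $\prs$ via \uref{i-pn-u-s}: namely, that $H(\rnul) \in W$ and $H(\rnul) \in U$. The first is immediate from the definition of $H$, which by construction always terminates at a leaf and hence at an element of~$W$. The substantive work lies in establishing $H(\rnul) \in U$.

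The first step is to show that the root itself is in~$U$. By \uref{i-pn-rnul} the pairing relation $S$ is nonempty, so we can pick some $(a,b) \in S$; then $a \in \prs$, and \uref{i-pn-u-s} forces $a \in U$. Because $\langle V, O \rangle$ is a tree rooted at $\rnul$, iterating $O$ starting from $a$ produces a finite chain that terminates at $\rnul$. Walking up this chain and applying the contrapositive of \uref{i-pn-up} at each step (if a child belongs to $U$, so does its parent) yields $\rnul \in U$.

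The second step is to transfer membership from the root to its leftmost leaf. Since $\dep(\rnul) = 0$, Lemma~\ref{l-even-h-u} applied with $n = 0$ immediately gives $H(\rnul) \in U$. Combined with $H(\rnul) \in W$, a final appeal to \uref{i-pn-u-s} delivers $H(\rnul) \in \prs$, as desired.

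I do not expect a genuine obstacle here; the argument is a short axiom chase. The only place that requires a little care is the bootstrapping phase, where one must first exhibit \emph{some} element of~$U$ (via \uref{i-pn-rnul} and \uref{i-pn-u-s}) and then propagate $U$\df membership up to the root before Lemma~\ref{l-even-h-u} can be invoked; every remaining step is a direct instantiation of either one of the proof\df net conditions or of Lemma~\ref{l-even-h-u}.
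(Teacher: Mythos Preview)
Your proof is correct and uses exactly the same ingredients as the paper: \uref{i-pn-rnul}, \uref{i-pn-u-s}, \uref{i-pn-up}, and Lemma~\ref{l-even-h-u}. The only difference is cosmetic: the paper argues by contradiction (assuming $H(\rnul)\notin\prs$, deducing $H(\rnul)\notin U$, then $\rnul\notin U$ via the contrapositive of Lemma~\ref{l-even-h-u}, then $U=\varnothing$ via \uref{i-pn-up}, hence $\prs=\varnothing$, contradicting \uref{i-pn-rnul}), whereas you run the same chain directly.
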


\begin{proof}
Допустим, от противного, что 
$ H(\rnul) \notin \prs $.
Очевидно,
$ H(\rnul) \notin W $.
Согласно~\uref{i-pn-u-s}
$ H(\rnul) \notin U $.
Согласно лемме~\uref{l-even-h-u}
получаем
$ \rnul \notin U $.
Из~\uref{i-pn-up}
индукцией по длине пути получаем, что ни один элемент множества~$ V $
не принадлежит множеству~$ U $.
Согласно~\uref{i-pn-u-s}
$ \prs = \varnothing $.
Это противоречит
\uref{i-pn-rnul}.
\end{proof}

\begin{lemma} \label{l-dep-rnul}
$ \dep(S(H(\rnul))) = 1 $.
\end{lemma}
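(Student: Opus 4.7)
The plan is to apply property~\uref{i-pn-dep} directly to the $S$-paired vertices $H(\rnul)$ and $S(H(\rnul))$, after determining which of them is smaller in the linear order~$<$.

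First I would cite Lemma~\uref{l-pn-rnul} to conclude that $ H(\rnul) \in \prs $, so that $ S(H(\rnul)) $ is a well-defined element of~$ W $ with $ (H(\rnul),\, S(H(\rnul))) \in S $ (using~\uref{i-pn-symm}). I would then record the equality $ \dep(H(\rnul)) = \dep(\rnul) = 0 $, using that $\dep(\rnul) = 0$ is stated right after the introduction of~$\dep$ and that $H$ preserves depth by construction.

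The heart of the argument is a short case split on the relative order of $H(\rnul)$ and $S(H(\rnul))$ under~$<$. If $ H(\rnul) < S(H(\rnul)) $, then property~\uref{i-pn-dep} would force $ \dep(H(\rnul)) = \dep(S(H(\rnul))) + 1 $, hence $ \dep(S(H(\rnul))) = -1 $, which contradicts the fact that depths are nonnegative integers. So we must have $ S(H(\rnul)) < H(\rnul) $, and then \uref{i-pn-dep} applied to this ordered pair yields $ \dep(S(H(\rnul))) = \dep(H(\rnul)) + 1 = 1 $, which is exactly the claim.

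The one subtle point, which I expect to be the main obstacle, is excluding the degenerate case $ S(H(\rnul)) = H(\rnul) $, since property~\uref{i-pn-dep} says nothing when $a=b$. I would handle this by appealing to the convention that~$S$ matches two distinct atom occurrences and therefore has no fixed points, which is a structural feature of the proof nets set up in Section~\uref{s-proof-nets} rather than something one proves from the listed axioms in isolation. Everything else is a clean two-line application of the facts already in hand.
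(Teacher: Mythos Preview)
Your argument is correct and is precisely the intended one; the paper states this lemma without proof, treating it as immediate from~\uref{i-pn-dep} once one knows $\dep(H(\rnul))=0$ and $H(\rnul)\in\prs$ (Lemma~\ref{l-pn-rnul}). Your handling of the degenerate case $S(H(\rnul))=H(\rnul)$ is also accurate: irreflexivity of~$S$ is a structural convention of proof nets that the paper uses tacitly rather than listing among \uref{i-pn1}--\uref{i-pnn}.
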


\begin{lemma} \label{l-pn-basic}
Если
$ a \op c $,
$ \dep(a) > \dep( c ) $
и
$ a < H( c ) < S( a ) $,
то 
$ H( c ) \in \prs $
и
$ H( c ) < S(H( c )) < S( a ) $.
\end{lemma}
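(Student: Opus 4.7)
The plan is to proceed in three stages: (i) upgrade $a \op c$ to strict $a \os c$ and determine the parity of $\dep(c)$; (ii) verify $H(c) \in \prs$; (iii) locate $S(H(c))$ inside the interval $(H(c),S(a))$.

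Stage (i) is a two-line observation. Since $\dep(a) > \dep(c)$ precludes $a = c$, we have $a \os c$; Lemma~\ref{l-h-even-odd} applied to $(a,c)$ then rules out the odd case (which would give $H(c) < a$, contradicting $a < H(c)$), so $\dep(c) = 2n$ is even and $\dep(H(c)) = 2n$ as well.

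For stage (ii), axiom~(\ref{i-pn-u-s}) reduces the task to $H(c) \in W \cap U$. Membership $H(c) \in W$ is a structural property of $H$ in our proof-net setup (consistent with Lemma~\ref{l-h-s}). For $H(c) \in U$: if $H(c) \notin U$, then iterating axiom~(\ref{i-pn-up}) along the $O$-chain linking $a$ with $H(c)$ through $c$ would force $a \notin U$, contradicting $a \in \prs = W \cap U$ (which holds since $S(a)$ is defined).

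For stage (iii), let $e := S(H(c))$. By planarity~(\ref{i-pn-planar}), since $H(c)$ lies strictly between $a$ and $S(a)$, the $S$-partner $e$ must also lie strictly between them. To exclude the remaining case $a < e < H(c)$, I would assume it and apply axiom~(\ref{i-pn-so}) with $b := H(c)$: its four hypotheses ($a \os c$, $H(c) = b$, $a \in \prs$, $a < S(b) < b$) are all met, yielding a vertex $d$ with $S(a) \os d$ and $H(d) = H(c)$. Since $\dep(d) = \dep(H(d)) = 2n$ is even, property~(\ref{i-d-even}) applied to $S(a) \os d$ gives $S(a) \leq H(d) = H(c)$, contradicting $H(c) < S(a)$. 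Hence $H(c) < e < S(a)$, as required. The main subtlety I anticipate is arranging the axiom~(\ref{i-pn-so}) application so that this parity-based contradiction emerges cleanly; the rest is essentially bookkeeping.
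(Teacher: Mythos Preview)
Stages (i) and (iii) are correct and match the paper's argument. The gap is in stage (ii).

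You claim that if $H(c)\notin U$ then ``iterating axiom~(\ref{i-pn-up}) along the $O$-chain linking $a$ with $H(c)$ through $c$'' forces $a\notin U$. But no such $O$-chain exists. We have $a \os c$ and $H(c) \os c$, with $a$ and $H(c)$ lying on \emph{different} branches below $c$: since $H(c)\in W$ is a leaf, $a\op H(c)$ is impossible, and $H(c)\os a$ would give $\dep(H(c))\geq\dep(a)>\dep(c)=\dep(H(c))$. Axiom~(\ref{i-pn-up}) propagates non-membership in $U$ only from parent to child, so the assumption $H(c)\notin U$ tells you nothing about $c$, and hence nothing about $a$.

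The paper closes this gap in two moves. First, from $a\in U$ and $a\os c$ the contrapositive of~(\ref{i-pn-up}), iterated, yields $c\in U$. Second, Lemma~\ref{l-even-h-u} is applied: since $\dep(c)$ is even and $c\in U$, it follows that $H(c)\in U$. That lemma is \emph{not} a consequence of~(\ref{i-pn-up}); its proof uses~(\ref{i-pn-down-dep}) to check that every edge on the path from $H(c)$ up to $c$ has $\vid\neq 2$, and then~(\ref{i-pn-down-op}) (contrapositive) to push $U$-membership from $c$ back down to $H(c)$. This is precisely the parity-sensitive ingredient your stage~(ii) is missing.

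A minor omission in stage~(iii): you should also dispose of the case $e=H(c)$, which the paper does via~(\ref{i-pn-dep}).
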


\begin{proof}
Если
$ \dep( c ) $ нечётно,
то согласно~\uref{i-d-odd}
из $ a \os c $
получаем $ H( c ) \leq a $,
то есть $ H( c ) \leq a $,
что противоречит условию
$ a < H( c ) $.
Следовательно,
$ \dep( c ) $ чётно.
Согласно~\uref{i-d-even}
из $ c \os c $
получаем $ c \leq H( c ) $.

Согласно~\uref{i-pn-up}
из $ a \os c $ и $ a \in U $
следует
$ c \in U $.
Согласно лемме~\uref{l-even-h-u}
$ H( c ) \in U $, 
откуда получаем
$ H( c ) \in \prs $.
Обозначим
$ S(H( c )) = S( H( c ) ) $.
Согласно~\uref{i-pn-planar}
имеем, что
$ a < S(H( c )) < S( a ) $.
Если
$ H( c ) < S(H( c )) $,
то лемма доказана.
В~силу~\uref{i-pn-dep}
случай
$ S(H( c )) = H( c ) $
невозможен.
Осталось рассмотреть случай
$ S(H( c )) < H( c ) $.

Из
$ a \os c $, 
$ a \in \prs $ 
и
$ a < S(H( c )) < H( c ) $, 
согласно~\uref{i-pn-so}
получаем, что
найдётся такой элемент~$ d $, что
$ S(a) \os d $
и
$ H(d) = H( c ) $.
Так как
$ H(d) = H( c ) $,
то
$ \dep(d) = \dep( c ) $.
Следовательно,
$ \dep(d) $ чётно.
Согласно~\uref{i-d-even}
из $ S(a) \os d $
получаем $ S(a) \leq H(d) = H( c ) $,
что противоречит неравенству
$ H( c ) < S(a) $.
\end{proof}

Обозначим
$$
\delta = \{ a \mid \dep(a) \!=\! 2 \land a \!\in\! U \land
 \vid(a,O(a)) \!=\! -1 \land ( H(O(a)) \!=\! S(H(\rnul)) \lor H(O(a)) \!\notin\! U ) \} .
$$

\begin{lemma} \label{l-Theta-p-Xi-p}
Пусть
множество~$ \delta $
пусто.
Тогда исходная секвенция $ \Phi \arr p_i $
имеет вид
$ !\Theta \co !^n p_i \co !\Xi \arr p_i $.
\end{lemma}

\begin{proof}
Очевидно,
$ \metka(H(\rnul)) = p_i $.
Согласно условиям~\uref{i-pn-metka} и~\uref{i-pn-dep}
имеем, что
$ \metka\bigl(S(H(\rnul))\bigr) = p_i $
и
$ \dep\bigl(S(H(\rnul))\bigr) = 1 $.
Обозначим через~$ D $ тип из антецедента, содержащий вершину~$ S(H(\rnul)) $.
Рассмотрим случай, когда $ D $ содержит деление и, следовательно, имеет вид $ !^n ( A \li B ) $.
Обозначим через~$ a $ корень поддерева, соответствующего типу~$ A $.
Тогда $ O(a) $ является корнем поддерева, соответствующего типу~$ A \li B $.
Очевидно,
$ \dep(O(a)) = 1 $
и
$ \vid(a,O(a)) = -1 $,
откуда получаем
$ \dep(a) = 2 $.
Так как
$ \dep\bigl(H(O(a))\bigr) = \dep\bigl(O(a)\bigr) = 1 $
и в~поддереве, соответствующем типу~$ A \li B $,
только один лист с~единицей в~качестве значения функции~$\dep$,
то
$ H(O(a)) = S(H(\rnul)) $.
Поэтому
$ a \in \delta $,
что противоречит условиям леммы.

Осталось рассмотреть случай, когда $ D $ не содержит деления 
и, следовательно, имеет вид $ !^n p_i $.
Единственным листом в~$ D $
является
$ S(H(\rnul)) $.
Если 
$ U \cap W = \{ H(\rnul) , S(H(\rnul)) \} $,
то согласно лемме~\ref{l-u}
все типы антецедента, кроме~$ D $, начинаются с~$!$
и, следовательно, лемма доказана.
Иначе обозначим
$ b = \min \bigl( ( U \cap W ) \sminus \{ H(\rnul) , S(H(\rnul)) \} \bigr) $.
Обозначим через~$ c $
элемент, удовлетворяющий условиям
$ b \os c $
и
$ H(O(c)) = H(\rnul) $.
В~силу~\ref{i-pn-up},
из
$ b \in U $
получаем
$ c \in U $.

Рассмотрим сначала случай, когда
$ H(c) \in U $.
Очевидно,
$ \dep(H(c)) = \dep(c) = 1 $.
Согласно~\ref{i-pn-u-s}
определено значение
$ S(H(c)) $.
Если
$ S(H(c)) < H(c) $,
то
невозможно $ S(H(c)) \os c $
и поэтому $ S(H(c)) $ принадлежит поддереву какого-то другого типа
и этот тип не может быть~$ D $,
а это противоречит выбору~$ c $.
Пусть
$ H(c) < S(H(c)) $.
Тогда
$ \dep(S(H(c))) = 0 $.
Согласно леммам~\ref{l-h-s}
$ S(H(c)) = H(S(H(c))) = H(\rnul) $.
В~силу симметричности~$S$ имеем
$ H(c) = S(H(\rnul)) $.
Тогда
$ H(c) $
принадлежат поддереву, соответствующему типу~$ D $,
что противоречит выбору~$ c $.

Теперь рассмотрим случай, когда
$ H(c) \notin U $.
Тогда тип антецедента, соответствующий вершине~$ c $,
имеет вид $ A \li B $.
Обозначим корень поддерева, соответствующего типу~$ A $, через $ a $.
Из
$ \dep(c) = 1 $
получаем
$ \dep(a) = 2 $.
Согласно~\ref{i-pn-down-op}
имеем
$ a \in U $.
Из
$ H(O(a)) = H(c) \notin U $
и
$ \vid(a,O(a)) = -1 $
получаем
$ a \in \delta $,
что противоречит условиям леммы.
\end{proof}

Осталось рассмотреть случай, когда
$ \delta \neq \varnothing $.
Обозначим
$$
\edu = \max \{ a \mid
 O( a ) = \min \{ O( b ) \mid b \in \delta \}
\} .
$$

В~следующих леммах (до конца раздела~\ref{s-proof-nets})
предполагается, что
$ \delta \neq \varnothing $.

\begin{lemma} \label{l-dep-edu}
$ \dep( \edu ) = 2 $.
\end{lemma}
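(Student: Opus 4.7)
My plan is to show that $\edu$ is in fact the unique $\vid = -1$ child of the vertex $m_0 := \min \{ O(b) \mid b \in \delta \}$; that child lies in $\delta$, so it has depth $2$ by definition, and hence so does $\edu$.

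First I would fix any $b_0 \in \delta$ with $O(b_0) = m_0$ (possible because $\delta \neq \varnothing$ is the standing assumption). The defining clauses of $\delta$ give $\dep(b_0) = 2$ and $\vid(b_0, m_0) = -1$, whence $\dep(m_0) = 1$. Since the graph is built from a formula and $m_0$ has a child with $\vid = -1$, the construction forces $m_0$ to be the root of a $\li$\df subformula, so $m_0$ has exactly two children: $b_0$ itself and some $a_+$ with $\vid(a_+, m_0) = 1$ (and therefore $\dep(a_+) = 1$). Thus $\{a \mid O(a) = m_0\} = \{b_0, a_+\}$, and the task reduces to proving $a_+ < b_0$.

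The core of the argument is to show that the subtree of $a_+$ forms an initial segment of the subtree of $m_0$. Since $\dep(m_0) = 1$ is odd, property \uref{i-d-odd} gives $H(m_0) \leq c$ for every $c$ with $c \os m_0$, so $H(m_0)$ is the $<$\df minimum of the subtree of $m_0$. Because $a_+$ is the only non-(-1) child of $m_0$, the definition of $H$ yields $H(m_0) = H(a_+)$, which lies in the subtree of $a_+$. By the convexity property \uref{i-d1}, the subtree of $a_+$ is a convex subset of the subtree of $m_0$; containing the latter's minimum, it must be an initial segment. Since $b_0$ belongs to the subtree of $m_0$ but not to that of $a_+$ (the siblings $b_0$ and $a_+$ are $O$\df incomparable), $b_0$ exceeds every vertex in the subtree of $a_+$; in particular $b_0 > a_+$, so $\edu = \max \{ b_0, a_+ \} = b_0$ and $\dep(\edu) = 2$.

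The anticipated obstacle is almost purely notational: correctly phrasing the identity $H(m_0) = H(a_+)$ and the initial-segment claim in terms of the paper's $\os$ and $<$ apparatus. The substantive content is fully carried by the construction (ensuring that $m_0$ is a binary $\li$\df vertex with a unique non-(-1) child), together with \uref{i-d1} and \uref{i-d-odd}.
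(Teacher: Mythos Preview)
Your argument is correct and follows the same idea as the paper: $\edu$ is the unique $\vid=-1$ child of $m_0=\min\{O(b)\mid b\in\delta\}$, which lies in $\delta$ and hence has depth~$2$. The paper's proof is just the single phrase ``obvious from the definition of~$\delta$'', taking for granted the order relation $a_+<b_0$ that you carefully derive from \uref{i-d1} and \uref{i-d-odd}.
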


\begin{proof}
Очевидно из определения~$ \delta $.
\end{proof}

Обозначим
$$
\alpha = \{ a \mid a \os \edu \} ,\quad
\varphi = \{ b \in \prs \mid H(\edu) < b \land ( S( b ) = H(\rnul) \lor S( b ) < \alpha ) \} .
$$

\begin{lemma} \label{l-Srnul}
Пусть
$ S(H(\rnul)) < \edu $
и
$ a \in \alpha $.
Тогда
$ S(H(\rnul)) < a $.
\end{lemma}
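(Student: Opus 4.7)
The plan is to prove the lemma by contradiction. I would assume that $a \leq S(H(\rnul))$ (so negating the desired conclusion) and derive an impossibility.

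First I would combine the assumed inequality $a \leq S(H(\rnul))$ with the hypothesis $S(H(\rnul)) < \edu$ to obtain the chain $a \leq S(H(\rnul)) \leq \edu$. Since $a \in \alpha$ by hypothesis, we have $a \os \edu$; by reflexivity, $\edu \os \edu$. Property~\uref{i-d1} is tailor-made for this situation: it asserts that the $\os$-relation to a common endpoint is closed under betweenness in~$<$. Applying~\uref{i-d1} to the chain $a \leq S(H(\rnul)) \leq \edu$ with common endpoint $\edu$, I would conclude $S(H(\rnul)) \os \edu$.

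The contradiction then comes from comparing depths. By Lemma~\ref{l-dep-rnul}, $\dep(S(H(\rnul))) = 1$, and by Lemma~\ref{l-dep-edu}, $\dep(\edu) = 2$. Since $\dep(x)$ is defined as the number of $-1$-labelled edges on the $O$-path from $x$ to $\rnul$, it is monotone non-increasing along~$\os$: from $S(H(\rnul)) \os \edu$ it would follow that $1 = \dep(S(H(\rnul))) \geq \dep(\edu) = 2$, which is absurd.

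I do not anticipate any real obstacle here. The only ingredient not packaged as a cited statement is depth-monotonicity of~$\os$, but this is immediate from the very definition of $\dep$. The conceptual heart of the proof is simply that~\uref{i-d1} lets us ``lift'' $S(H(\rnul))$ into the $\os$-closure of $\edu$, after which the mismatch between an odd depth ($1$) and an even depth ($2$) closes the argument instantly. If one preferred a slightly more axiomatic presentation, one could replace depth-monotonicity by invoking Lemma~\ref{l-h-even-odd}\uref{i-even} to get $S(H(\rnul)) < H(\edu)$ and then push the same depth inconsistency through the $H$-values, but the direct route is shorter.
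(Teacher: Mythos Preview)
Your proof is correct and follows essentially the same route as the paper: assume $a \leq S(H(\rnul))$, apply \uref{i-d1} to the chain $a \leq S(H(\rnul)) \leq \edu$ (using $a \os \edu$ and $\edu \os \edu$) to obtain $S(H(\rnul)) \os \edu$, and derive a contradiction from $\dep(S(H(\rnul))) = 1 < 2 = \dep(\edu)$. The paper records the depth contradiction in one line without spelling out the monotonicity of $\dep$ along $\os$, but the content is identical.
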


\begin{proof}
Допустим, от противного, что 
$ a \leq S(H(\rnul)) $.
Условие
$ a \in \alpha $,
означает, что
$ a \os \edu $.
Так как
$ a \leq S(H(\rnul)) \leq \edu $,
из~\ref{i-d1}
получаем
$ S(H(\rnul)) \os \edu $.
Это противоречит неравенству
$ \dep(S(H(\rnul))) = 1 < 2 = \dep(\edu) $.
\end{proof}

\begin{lemma} \label{l-phi}
Множество~$ \varphi $
непусто.
\end{lemma}

\begin{proof}
Если 
$ \edu < S(H(\rnul)) $,
то
$ S(H(\rnul)) \in \varphi $.

Случай
$ \edu = S(H(\rnul)) $
невозможен, так как
$ \dep(\edu) = 2 \neq 1 = \dep(S(H(\rnul))) $.

Пусть теперь
$ S(H(\rnul)) < \edu $.
Докажем, что
$ H(\rnul) \in \varphi $.
Утверждение $ H(\rnul) \in \prs $
доказано в лемме~\ref{l-pn-rnul}.
Из лемм~\ref{l-dep-edu} и~\ref{l-less-rnul}
получаем, что
$ \edu < H(\rnul) $.
Согласно лемме~\ref{l-Srnul} имеем, что
$ S(H(\rnul)) < \alpha $.
\end{proof}

Обозначим
$$
\fg = \min \varphi ,\quad
\pi = ( H(\edu) ; \fg ) .
$$

\begin{lemma} \label{l-dep-fg}
$ \dep( \fg ) \leq 1 $.
\end{lemma}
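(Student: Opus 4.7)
The plan is a proof by contradiction: assume $\dep(f) \geq 2$ and derive a contradiction from the conditions defining $\varphi$ together with the minimality $f = \min \varphi$.

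Since $f \in \varphi$, either $S(f) = H(\rnul)$ or $S(f) < \alpha$, and I would handle these two subcases separately. The first subcase is short: $\dep(S(f)) = \dep(H(\rnul)) = 0$, so PN5 applied to the pair $\{f, S(f)\}$ forces $f < S(f)$ (the alternative $S(f) < f$ would give $\dep(S(f)) = \dep(f) + 1 \geq 3$, contradicting $\dep(S(f)) = 0$); then PN5 yields $\dep(f) = \dep(S(f)) + 1 = 1$, already contradicting the assumption $\dep(f) \geq 2$.

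In the subcase $S(f) < \alpha$, I would first observe that $\edu \in \alpha$ and that $\alpha \subseteq \{ x \mid x \leq H(\edu) \}$ by property \ref{i-d-even} (applied to each $a \os \edu$ with $\dep(\edu) = 2$ even), so that $S(f) < H(\edu) < f$; thus the $S$-pair through $f$ straddles the interval $(H(\edu), f)$. To contradict $f = \min \varphi$, I would exhibit some $f' \in (H(\edu), f) \cap \varphi$. The plan is to produce such an $f'$ via Lemma \ref{l-pn-basic}: choose an $O$-ancestor $c$ on the $O$-path above a well-chosen vertex (most naturally $f$ or $S(f)$), of strictly smaller depth, so that the hypothesis $a < H(c) < S(a)$ of the lemma is met; the lemma then supplies $H(c) \in \prs$ with $H(c) < S(H(c)) < S(a)$, and planarity \ref{i-pn-planar} combined with $S(f) < \alpha$ should force $S(H(c))$ to satisfy either $S(H(c)) = H(\rnul)$ or $S(H(c)) < \alpha$, placing $H(c)$ into $\varphi$ strictly below $f$.

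The main obstacle lies in this second subcase: selecting $c$ correctly and verifying the $\varphi$-condition on $S(H(c))$. I expect this to require a further split on whether $f < S(f)$ or $S(f) < f$ (PN5 gives different relations between $\dep(f)$ and $\dep(S(f))$ in these situations) and on the parity of $\dep(f)$, invoking the appropriate clause of Lemma \ref{l-h-even-odd} to compare $H(c)$ with the reference vertices $H(\edu)$ and the boundary of $\alpha$; the planarity condition \ref{i-pn-planar} will be the key structural tool used to rule out crossings of $S$-pairs inside $(S(f), f)$ that would otherwise prevent $H(c)$ from landing in $\varphi$.
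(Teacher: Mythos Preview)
Your overall strategy matches the paper's: contradiction via the minimality of $f$ in $\varphi$, with Lemma~\ref{l-pn-basic} doing the work in the second subcase. Two points, however, need correction.

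First, the anticipated split on $f < S(f)$ versus $S(f) < f$ is unnecessary. In the subcase $S(f) < \alpha$ you already have $S(f) < H(\edu) < f$, so $S(f) < f$ is forced; then \ref{i-pn-dep} gives $\dep(S(f)) = \dep(f)+1 \geq 3$ directly. The paper proceeds exactly this way and takes $c = \cdu$ to be the ancestor of $S(f)$ with $\dep(\cdu) = 2$ and $\dep(O(\cdu)) = 1$; Lemma~\ref{l-pn-basic} is then applied with $a = S(f)$.

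Second, and this is a real gap: the element you propose to place in $\varphi$ is the wrong one. With the choice above, Lemma~\ref{l-pn-basic} yields $H(\cdu)\in\prs$ and $H(\cdu) < S(H(\cdu)) < f$. But from $S(f)\notin\alpha$ one gets $\cdu\neq\edu$, hence $\cdu<\edu$ and $H(\cdu)<\alpha$; in particular $H(\cdu) < H(\edu)$, so $H(\cdu)\notin\varphi$. Your expectation that planarity will force $S(H(c)) < \alpha$ points in the wrong direction: in fact $S(H(\cdu))$ lies to the \emph{right} of $\alpha$. The paper shows $H(\edu) < S(H(\cdu))$ (using \ref{i-d1} and the depth comparison $\dep(S(H(\cdu)))=1<2=\dep(\edu)$, not planarity), and since $S(S(H(\cdu))) = H(\cdu) < \alpha$, it is $S(H(\cdu))$ that belongs to $\varphi$. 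Minimality of $f$ then gives $f \leq S(H(\cdu))$, which together with $S(f) < H(\cdu) < f \leq S(H(\cdu))$ violates \ref{i-pn-planar} (or, more simply, contradicts $S(H(\cdu)) < f$ from Lemma~\ref{l-pn-basic}). So the skeleton of your argument is right, but you must swap the roles of $H(c)$ and $S(H(c))$ in the final step.
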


\begin{proof}
По построению множества~$ \varphi $
из
$ \fg \in \varphi $
следует
$ S( \fg ) = H(\rnul) $ 
или 
$ S( \fg ) < \alpha $.
В~первом случае
$ \dep( \fg ) = 1 $
в~силу леммы~\ref{l-dep-rnul}.

Осталось рассмотреть случай
$ S( \fg ) < \alpha $.
Тогда из
$ S( \fg ) < H(\edu) $
и
$ H(\edu) < \fg $
получаем
$ S( \fg ) < \fg $.

Допустим, от противного, что
$ \dep(\fg) \geq 2 $.
Согласно~\uref{i-pn-dep}
имеем, что
$ \dep\bigl(S(\fg)\bigr) \geq 3 $.
Обозначим через~$ \cdu $
элемент, удовлетворяющий условиям
$ S(\fg) \op \cdu $,
$ \dep( \cdu ) = 2 $
и
$ \dep(O( \cdu )) = 1 $.
Из того, что $ S(\fg) < H(\edu) $, $ S(\fg) \op \cdu $
и $ \dep( \cdu ) = \dep(H(\edu)) $
получаем, что
$ \cdu \leq \edu $.
Из того, что
$ S(\fg) \notin \alpha $,
получаем, что
$ \cdu \neq \edu $.
Итого,
$ \cdu < \edu $,
и даже
$ H( \cdu ) < \alpha $.

Согласно лемме~\uref{l-h-even-odd} имеем $ S(\fg) < \cdu $.
Согласно лемме~\ref{l-pn-basic} имеем
$ H( \cdu ) < S(H( \cdu )) $.
Из~\uref{i-d1} получаем, что
$ H(\edu) < S(H( \cdu )) $.

Мы доказали, что
$ S(H( \cdu )) \in \varphi $.
Следовательно,
$ \fg \leq S(H( \cdu )) $.
Имеем
$$
 S(\fg) < H( \cdu ) < \fg \leq S(H( \cdu )) ,
$$
что противоречит~\uref{i-pn-planar}.
\end{proof}

\begin{lemma} \label{l-alpha-pi-fg}
Пусть
$ a \in \alpha \cup \pi $.
Тогда
$ a < \fg $.
\end{lemma}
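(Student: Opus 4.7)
The plan is to split on whether $a\in\pi$ or $a\in\alpha$.

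The case $a \in \pi$ is immediate: since $\pi = (H(\edu);\fg)$ is an open interval in the order $<$, any element of it is strictly less than $\fg$ by definition.

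For $a \in \alpha$, the idea is to sandwich $a$ between $H(\edu)$ and $\fg$. By the definition of $\alpha$ we have $a \os \edu$, and by Lemma \ref{l-dep-edu} we have $\dep(\edu) = 2$, which is even. Property \uref{i-d-even} (applied with $b = \edu$ and $n = 1$) then yields $a \leq H(\edu)$. On the other hand, Lemma \ref{l-phi} guarantees that $\varphi$ is nonempty, so $\fg = \min\varphi$ is itself an element of $\varphi$; the defining condition of $\varphi$ now gives $H(\edu) < \fg$. Combining, $a \leq H(\edu) < \fg$, so $a < \fg$.

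I do not foresee any real obstacle: this lemma is essentially a bookkeeping step that assembles the definitions of $\alpha$, $\pi$, and $\varphi$ together with Lemma \ref{l-dep-edu} and property \uref{i-d-even}. The substantive work has already been carried out in Lemma \ref{l-dep-edu} (to pin down $\dep(\edu)=2$) and in Lemma \ref{l-phi} (to ensure that $\min\varphi$ exists and lies in $\varphi$, so that the strict inequality $H(\edu) < \fg$ is available).
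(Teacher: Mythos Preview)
Your proof is correct and matches the paper's argument essentially step for step: the case $a\in\pi$ is immediate from the definition of the interval, and for $a\in\alpha$ both you and the paper use Lemma~\ref{l-dep-edu} together with \uref{i-d-even} to get $a\le H(\edu)$, then $\fg\in\varphi$ to obtain $H(\edu)<\fg$. Your explicit invocation of Lemma~\ref{l-phi} to justify $\fg\in\varphi$ is a small elaboration of what the paper leaves implicit.
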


\begin{proof}
Случай
$ a \in \pi $
очевиден.
Пусть
$ a \in \alpha $,
т.~е.\
$ a \os \edu $.
По лемме~\matirefm{l-dep-edu}
согласно~\uref{i-d-even}
получаем, что
$ a \leq H(\edu) $.
Так как
$ \fg \in \varphi $,
то
$ H(\edu) < \fg $,
что влечёт
$ a < \fg $.
\end{proof}

\begin{lemma} \label{l-alpha-pi-interval}
Пусть
$ a \leq b \leq c $,
$ a \in \alpha \cup \pi $
и
$ c \in \alpha \cup \pi $.
Тогда
$ b \in \alpha \cup \pi $.
\end{lemma}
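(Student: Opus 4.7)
My plan is to prove $<$-convexity of $\alpha \cup \pi$ by case analysis on whether $b \leq H(\edu)$ or $H(\edu) < b$, after first establishing the key preliminary identification $H(\edu) = \edu$. The latter holds because $\edu \in \delta$ forces $\vid(\edu, O(\edu)) = -1$, so $\dep(O(\edu)) = 1$, while $\dep(H(\edu)) = \dep(\edu) = 2$; hence no node strictly above $\edu$ along the $O$-chain has the required depth, forcing $H(\edu) = \edu$. Equivalently, $\max \alpha = H(\edu)$, so $\alpha$ and $\pi$ meet flush at $\edu$ without a gap.

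For $b \leq H(\edu) = \edu$: since every element of $\pi = (H(\edu); \fg)$ strictly exceeds $H(\edu)$, the assumption $a \leq b \leq H(\edu)$ excludes $a \in \pi$, so $a \os \edu$. Applying~\uref{i-d1} to the chain $a \leq b \leq \edu$ with $a \os \edu$ and the reflexive $\edu \os \edu$ then yields $b \os \edu$, so $b \in \alpha$.

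For $H(\edu) < b$: if $c$ belonged to $\alpha$, then~\uref{i-d-even} with $\dep(\edu) = 2$ would force $c \leq H(\edu) < b$, contradicting $b \leq c$; hence $c \in \pi$ and $c < \fg$. Then $H(\edu) < b \leq c < \fg$, so $b \in \pi$.

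The principal obstacle is exactly the preliminary identification $H(\edu) = \edu$: without it, a point $b$ in the potential gap $(\max \alpha; H(\edu)]$ would lie outside both $\alpha$ (by $<$-convexity of $\alpha$ and maximality) and $\pi$ (which is open at $H(\edu)$), and the claim would fail. The condition $\vid(\edu, O(\edu)) = -1$ embedded in the definition of $\delta$ is precisely what closes this gap.
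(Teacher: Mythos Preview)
Your case split on $b \le H(\edu)$ versus $H(\edu) < b$ is exactly the paper's, and your second case is fine (it just inlines Lemma~\ref{l-alpha-pi-fg}). The problem is the ``preliminary identification'' $H(\edu) = \edu$: it is false in general, and you use it in the first case.

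Recall that $H(a)$ is the unique \emph{leaf} $\ell$ with $\ell \os a$ and $\dep(\ell)=\dep(a)$; it lies \emph{below} $a$ in the tree, not above. Your argument that ``no node strictly above $\edu$ along the $O$-chain has depth~$2$'' is about ancestors and says nothing about $H(\edu)$. Concretely, $\edu$ is the root of the $A$-subtree for some antecedent occurrence of $A\li B$; whenever $A$ is compound (e.g.\ $A = p_1\li p_1$ in $(p_1\li p_1)\li p_2 \to p_2$), $\edu$ is not a leaf and $\edu < H(\edu)$ strictly. Then from $b \le H(\edu)$ you cannot conclude $b \le \edu$, so your application of~\uref{i-d1} to the chain $a \le b \le \edu$ breaks.

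The fix is immediate and makes the whole preliminary step unnecessary: in the first case apply~\uref{i-d1} to $a \le b \le H(\edu)$ using $a \os \edu$ and $H(\edu) \os \edu$ (the latter holds by definition of $H$). That is precisely what the paper does. There is no ``gap $(\max\alpha; H(\edu)]$'' to worry about: $\max\alpha = H(\edu)$ already, since $H(\edu)\in\alpha$ and by~\uref{i-d-even} every element of $\alpha$ is $\le H(\edu)$.
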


\begin{proof}
Сначала рассмотрим случай
$ b \leq H(\edu) $.
Так как
$ a \notin \pi $,
имеем
$ a \in \alpha $.
По~\uref{i-d1}
из
$ a \os \edu $
и
$ H(\edu) \os \edu $
получаем, что
$ b \os \edu $,
т.~е.\
$ b \in \alpha $.

Теперь рассмотрим случай
$ H(\edu) < b $.
Согласно лемме~\matirefm{l-alpha-pi-fg}
имеем
$ c < \fg $.
Получаем, что
$ H(\edu) < b < \fg $,
т.~е.\
$ b \in \pi $.
\end{proof}

\begin{lemma} \label{l-alpha-pi-closed}
Пусть
$ ( a , b ) \in S $
и
$ a \in \alpha \cup \pi $.
Тогда
$ b \in \alpha \cup \pi $.
\end{lemma}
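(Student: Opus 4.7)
The plan is to first identify $\alpha\cup\pi$ as the interval $\{c\in V : L\le c<\fg\}$ with $L=\min\alpha$. By Lemma~\ref{l-alpha-pi-fg} every element of $\alpha\cup\pi$ lies strictly below $\fg$. Property~\ref{i-d-even} applied with $b=\edu$ (using $\dep(\edu)=2$) gives $c\le H(\edu)$ for every $c\os\edu$, and since $H(\edu)\os\edu$ also $H(\edu)\in\alpha$, so $\max\alpha=H(\edu)$; combined with $\pi=(H(\edu),\fg)$ and convexity (Lemma~\ref{l-alpha-pi-interval}) this gives the desired form. The lemma thus reduces to showing that no $S$\df link exits $[L,\fg)$.

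Using PN1, I split on the order of $a$ and $b$. \emph{Case $a<b$:} since $a\ge L$, only the option $b\ge\fg$ is dangerous. I apply~\ref{i-pn-planar} to $(a,b)$ and to the $S$\df link at $\fg$: because $\fg\in\varphi$, $S(\fg)$ is either $H(\rnul)$ or strictly less than every element of $\alpha$. The option $S(\fg)=H(\rnul)$ forces $a<H(\rnul)<b$ when $b>\fg$, but $H(\rnul)$ is the overall rightmost vertex of $V$ (as the head atom of the top\df level formula), so $b>H(\rnul)$ is impossible; and when $b=\fg$, property~\ref{i-pn-dep} combined with Lemma~\ref{l-dep-fg} gives $\dep(a)\ge 1$, so $a\ne H(\rnul)$. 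The option $S(\fg)<\alpha$ forces $a<L$, contradicting $a\ge L$. For $b<\fg$: if $b\le H(\edu)$ then $a\in\alpha$ (the alternative $a\in\pi$ would give $a>H(\edu)\ge b$, contradicting $a<b$), and~\ref{i-d1} applied to $a,H(\edu)\os\edu$ with $a\le b\le H(\edu)$ places $b\in\alpha$; if $H(\edu)<b<\fg$ then $b\in\pi$ by definition.

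\emph{Case $b<a$:} if $b\ge L$, then $L\le b\le a$ with $L,a\in\alpha\cup\pi$, and Lemma~\ref{l-alpha-pi-interval} gives $b\in\alpha\cup\pi$. The remaining subcase $b<L$, in which the $S$\df link would exit $\alpha\cup\pi$ on the left, is the main obstacle. I would use that $H(\edu)\in\prs$ (by Lemma~\ref{l-even-h-u}, since $\edu\in U$ with $\dep(\edu)=2$ and $H(\edu)$ is atomic): when $a=H(\edu)$, the configuration $S(H(\edu))>H(\edu)$ is handled by the already\df proven case $a<b$ applied to $(H(\edu),S(H(\edu)))$, which places $S(H(\edu))$ in $[L,\fg)$ and contradicts $b<L$; the complementary configuration $S(H(\edu))<H(\edu)$ requires combining $\dep(S(H(\edu)))=3$ (from~\ref{i-pn-dep}) with the specific choice of $\edu$ as the maximum element of $\delta$ with minimum $O(\cdot)$. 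For $a\ne H(\edu)$, I would apply~\ref{i-pn-planar} to $(a,b)$ together with the $S$\df link at $H(\edu)$ and, if needed, at further atoms of $\alpha\cap\prs$, to reduce to the previous cases; I expect any remaining escape configuration to yield a competing element of $\delta$ violating this maximality.
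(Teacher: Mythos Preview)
Your interval description $\alpha\cup\pi=[L,\fg)$ and the treatment of the case $a<b$ are essentially correct and agree with the paper (the paper argues directly from Lemma~\ref{l-alpha-pi-interval} and planarity, but the content is the same). The case $b<a$ with $b\ge L$ is also fine.

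The genuine gap is the subcase $b<a$ with $b<L$. Here your outline is not a proof: you single out $a=H(\edu)$, mention links at further atoms of $\alpha\cap\prs$, and conjecture that a violation of the extremal choice of~$\edu$ inside~$\delta$ will emerge. None of this is what actually works, and the maximality of~$\edu$ in~$\delta$ is never invoked in the paper's argument for this lemma.

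Two ideas are missing. First, you overlook the easy sub\-subcase $a\in\pi$ (that is, $H(\edu)<a$). Then $a\in\prs$, $H(\edu)<a$, and $S(a)=b<\alpha$, so $a$ satisfies the defining clause of~$\varphi$; hence $\fg\le a$, contradicting Lemma~\ref{l-alpha-pi-fg}. Second, in the remaining sub\-subcase $a\in\alpha$ (so $a\le H(\edu)$ and $\dep(a)\ge 2$, hence $\dep(b)\ge 3$ by~\ref{i-pn-dep}), the paper does \emph{not} use planarity against $S(H(\edu))$. Instead it climbs from~$b$ to its depth\df $2$ ancestor~$c$, shows $H(c)<\alpha$, and applies Lemma~\ref{l-pn-basic} (which encapsulates the condition~\ref{i-pn-so}) to obtain $H(c)\in\prs$ with $H(c)<S(H(c))<a$ and $\dep(S(H(c)))=1$. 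The contradiction then comes from comparing $S(H(c))$ with $S(H(\rnul))$: via Lemma~\ref{l-Srnul} and planarity one gets $S(H(\rnul))<b\le S(H(c))<H(\edu)$, and then~\ref{i-d1} forces $S(H(c))\os S(H(\rnul))$; since both have depth~$1$ this yields $S(H(c))=S(H(\rnul))$, which is impossible. Your sketch contains none of these steps, and the route you propose through ``competing elements of~$\delta$'' does not lead to this contradiction.
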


\begin{proof}
Сначала рассмотрим случай
$ a < b $.
Если
$ b < \fg $,
то из того, что
$ a < b \leq \max [ H(\edu) ; \fg ) $,
согласно лемме~\ref{l-alpha-pi-interval}
получаем, что
$ b \in \alpha \cup \pi $.
Пусть
$ \fg \leq b $.
Согласно~\uref{i-pn-planar}
имеем
$ a \leq S(\fg) < b $.
Так как
$ \fg \in \varphi $,
имеем
$ S(\fg) < H(\edu) $.
Следовательно,
$ a < H(\edu) $,
что влечёт
$ a \notin \pi $
и
$ a \in \alpha $,
т.~е.\
$ a \os \edu $.
По~\uref{i-d1}
из
$ a \leq S(\fg) < H(\edu) $
получаем, что
$ S(\fg) \os \edu $,
т.~е.\
$ S(\fg) \in \alpha $.
С~другой стороны,
$ \fg \in \varphi $
влечёт
$ S(\fg) < \alpha $,
противоречие.

Теперь рассмотрим случай
$ b < a $.
Допустим, от противного, что
$ b \notin \alpha \cup \pi $.
Согласно лемме~\matirefm{l-alpha-pi-interval}
имеем, что
$ b < \alpha $.

Если
$ H(\edu) < a $,
то
$ a \in \varphi $,
что влечёт
$ \fg \leq a $,
а это противоречит лемме~\matirefm{l-alpha-pi-fg}.
Осталось рассмотреть случай
$ a \leq H(\edu) $.
Очевидно,
$ a \in \alpha $,
т.~е.\
$ a \os \edu $.
Согласно лемме~\matirefm{l-dep-edu}
имеем, что
$ \dep( a ) \geq 2 $.
По~\uref{i-pn-dep}
получаем, что
$ \dep( b ) \geq 3 $.
Обозначим через~$ c $
элемент, удовлетворяющий условиям
$ b \op c $
и
$ \dep( c ) = 2 $.
Из
$ b < H(\edu) $
по~\uref{i-d1}
получаем, что
$ H(c) \leq H(\edu) $.
Из
$ b \notin \alpha $
следует, что
$ H(c) \neq H(\edu) $.
Из
$ H(c) < H(\edu) $
и
$ \dep( H(c) ) = \dep( \edu ) $
получаем, что
$ H(c) < \alpha $
и, в~частности,
$ H(c) < a $.
Из~\uref{i-d-even}
получаем, что
$ b \leq H(c) $.
Из того, что
$ b \leq H(c) < a = S( b ) $
и
$ b \op c $,
по лемме~\matirefm{l-pn-basic}
получаем, что
$ H(c) \in \prs $
и
$ H(c) < S( H(c) ) < a $.
По~\uref{i-pn-dep} имеем
$ \dep\bigl(S( H(c) )\bigr) = 1 $.
Из леммы~\ref{l-Srnul}
по~\uref{i-pn-planar}
получаем, что
$ S(H(\rnul)) < b $.
Следовательно,
$ S(H(\rnul)) < S( H(c) ) $.
Из того, что
$ S(H(\rnul)) < S( H(c) ) < H(\edu) $,
согласно~\uref{i-d1}
получаем, что
$ S( H(c) ) \os S(H(\rnul)) $.
Так как
$ \dep\bigl(S( H(c) )\bigr) = \dep\bigl(S(H(\rnul))\bigr) $,
получаем, что
$ S( H(c) ) = S(H(\rnul)) $,
противоречие.
\end{proof}

\begin{lemma} \label{l-pi}
Пусть
$ b \op a $
и
$ \dep( a ) = 1 $.
Тогда условия
$ a \in \pi $
и
$ b \in \pi $
равносильны.
\end{lemma}

\begin{proof}
В~силу леммы~\uref{l-h-even-odd}
имеем, что
$ a < b $.

Сначала допустим, от противного, что
$ a \in \pi $,
но
$ b \notin \pi $.
Очевидно,
$ \fg \leq b $.
Согласно условию леммы
$ \dep(b) \neq 0 $,
т.~е.\
$ b \neq H(\rnul) $.
Следовательно,
$ b < H(\rnul) $.
Получаем, что
$ f \neq H(\rnul) $,
т.~е.\
$ \dep(\fg) \neq 0 $.
Согласно лемме~\matirefm{l-dep-fg}
имеем
$ \dep(\fg) = 1 $.
Из
$ \fg \leq b $,
$ b \op a $
и
$ \dep(\fg) = \dep( a ) $
согласно~\uref{i-d1}
выводим, что $ \fg \leq a $,
что противоречит условию
$ a \in \pi $.

Теперь допустим, от противного, что
$ b \in \pi $,
но
$ a \notin \pi $.
Очевидно,
$ a \leq H(\edu) < b $.
По~\uref{i-d1}
получаем, что
$ H(\edu) \os a $.
Из леммы~\ref{l-dep-edu}
и того, что
$ \dep( a ) = 1 $,
получаем, что
$ a = O(\edu) = S(H(\rnul)) $.
Обозначим через~$ c $
элемент, удовлетворяющий условиям
$ b \os c $
и
$ O( c ) = a $.
Очевидно,
$ c \in \delta $.
Следовательно,
$ c \leq H(\edu) < b $.
Так как
$ \dep( c ) = 2 $,
то
$ c < b $
противоречит~\uref{i-d-even}.
\end{proof}

\begin{lemma} \label{l-long}
Ограничение сети
$ \langle V , O , \metka , \vid , < , U , S \rangle $
на множество
$ V \sminus ( \alpha \cup \pi ) $
является сетью для некоторой секвенции.
\end{lemma}

\begin{proof}
Согласно построению
вершина~$ S(H(\rnul)) $
является корнем некоторого поддерева, соответствующего некоторому
типу вида~$ A \li B $
из антецедента.
При этом
множество~$ \alpha $
соответствует типу~$ A $.
Согласно лемме~\matirefm{l-pi}
исходная секвенция имеет вид
$ \Gamma \co \Pi \co ( A \li B ) \co \Delta \arr p_i $,
где
множество~$ \pi $
соответствует
последовательности~$ \Pi $.
Следовательно, множество
$ V \sminus ( \alpha \cup \pi ) $
соответствует секвенции
$ \Gamma \co B \co \Delta \arr p_i $.
Для ограничения сети
$ \langle V , O , \metka , \vid , < , U , S \rangle $
на множество
$ V \sminus ( \alpha \cup \pi ) $
легко проверить условия \uref{i-pn1}\ddf\uref{i-pnn}.
\end{proof}

\begin{lemma} \label{l-step}
Найдутся такие типы~$ A $ и~$ B $ и последовательности~$ \Gamma $ и~$ \Pi $,
что
$$
 \Phi = \Gamma \co \Pi \co ( A \li B ) \co \Delta 
$$
и существуют сети для секвенций
$ \Pi \arr A $
и
$ \Gamma \co B \co \Delta \arr p_i $.
\end{lemma}

\begin{proof}
Определим
$ A $ и~$ \Pi $
как в~доказательстве леммы~\matirefm{l-long},
где предъявлена сеть для секвенции
$ \Gamma \co B \co \Delta \arr p_i $.
Сеть для секвенции
$ \Pi \arr A $
получается из
ограничения сети
$ \langle V , O , \metka , \vid , < , U , S \rangle $
на множество
$ \alpha \cup \pi $
очевидными преобразованиями:
перестановкой двух частей ($ \alpha $ и $ \pi $)
в~смысле отношения~$ < $
и объявлением вершины~$ \edu $ корнем всего дерева,
что приводит к~уменьшению глубины каждой вершины из~$ \alpha $ на~$ 2 $.

Формально говоря, пусть
секвенции
$ \Pi \arr A $
соответствуют
дерево $ \langle V' , O' \rangle $,
разметка $ \metka' $ и линейный порядок~$ <' $.
Глубину вершины~$a$
в~дереве $ \langle V' , O' \rangle $
обозначим через $\dep'(a)$.
Легко доказать, что существует биекция
$ \hmap \colon V' \to \alpha \cup \pi $
со следующими свойствами:
\begin{align*}
& a \relo' b \Leftrightarrow
( \hmap(a) \relo \hmap(b) \land \hmap(a) \in \alpha \land \hmap(b) \in \alpha )
\lor{}
\\
&\quad{}\lor
( \hmap(a) \relo \hmap(b) \land \hmap(a) \in \pi \land \hmap(b) \in \pi )
\lor
( \hmap(a) \relo H(\rnul) \land \hmap(b) = \edu ) ,
\\
& \metka'( a ) = \metka\bigl(\hmap(a)\bigr) ,
\\
& a <' b \Leftrightarrow
( \hmap(a) < \hmap(b) \land \hmap(a) \in \alpha \land \hmap(b) \in \alpha )
\lor{}
\\
&\quad{}\lor
( \hmap(a) < \hmap(b) \land \hmap(a) \in \pi \land \hmap(b) \in \pi )
\lor
( \hmap(a) \in \pi \land \hmap(b) \in \alpha ) ,
\\
& \dep'( a ) =
\begin{cases}
\dep\bigl(\hmap(a)\bigr) - 2, &\text{если } \hmap(a) \in \alpha ,\\
\dep\bigl(\hmap(a)\bigr), &\text{если } \hmap(a) \in \pi .
\end{cases}
\end{align*}

Искомая сеть для секвенции
$ \Pi \arr A $
задаётся 
множеством
$$
U' = \{ a \mid \hmap(a) \in U \} 
$$
и
бинарным отношением
$$
S' = \bigl\{ ( a , b ) \mid \bigl(\hmap(a) , \hmap(b)\bigr) \in S \bigr\} .
$$
Осталось проверить условия~\uref{i-pn1}---\uref{i-pnn}
для~$ S' $.

Докажем~\uref{i-pn-up}
для~$ U' $.
Пусть
$ a \in U' $.
Если
$ \hmap(a) \in \alpha $
и
$ \hmap\bigl(O'(a)\bigr) \in \alpha $,
то
$ O'(a) \in U' $
следует из условия~\uref{i-pn-up} для~$ S $.
Аналогично если
$ \hmap(a) \in \pi $
и
$ \hmap\bigl(O'(a)\bigr) \in \pi $,
то
$ O'(a) \in U' $
следует из условия~\uref{i-pn-up}
для~$ S $.
Если же
$ \hmap(a) \in \pi $
и
$ \hmap\bigl(O'(a)\bigr) \in \alpha $,
то
$ \hmap\bigl(O'(a)\bigr) = \edu \in \prs $,
что влечёт
$ O'(a) \in U' $.

Условие~\uref{i-pn-rnul},
т.~е.\
$ S' \neq \varnothing $,
следует из того, что
$ \edu \in \alpha $
и
$ \edu \in \prs $.

Остальные условия из определения сети проверяются легко.
\end{proof}

Для завершения доказательства теоремы~\ref{t-pn-complete}
достаточно заметить, что
шаг индукции
следует из лемм~\matirefm{l-Theta-p-Xi-p} и~\matirefm{l-step}.
В~первом случае исходная секвенция $ \Phi \arr D $
выводится из аксиомы многократным применением 
правил $ (!\arr) $ и $ (\mathrm{W}!\arr) $,
во~втором случае секвенция $ \Phi \arr D $
получается по правилу~$ (\li\arr) $
из двух более коротких секвенций, к~которым применимо предположение индукции.
Теорема~\ref{t-pn-complete} доказана.

\end{document}